\documentclass[reqno]{amsart}

\usepackage{amssymb}
\usepackage{graphicx}
\usepackage{amscd}
\usepackage[hidelinks]{hyperref}
\usepackage{color}
\usepackage{float}
\usepackage{graphics,amsmath,amssymb}
\usepackage{amsthm}
\usepackage{amsfonts}
\usepackage{latexsym}
\usepackage{epsf}
\usepackage{xifthen}
\usepackage{mathrsfs}
\usepackage{dsfont}
\usepackage{makecell}
\usepackage[FIGTOPCAP]{subfigure}
\usepackage{amsmath}
\allowdisplaybreaks[4]
\usepackage{listings}
\usepackage{etoolbox}
\usepackage{fancyhdr}
\usepackage{pdflscape}
\usepackage[title,toc,titletoc]{appendix}
\usepackage{enumitem}
\usepackage[noadjust]{cite}
\usepackage{tikz}
\usepackage{ytableau}

\setlength{\parskip}{2pt}

\setlength{\headheight}{11pt}

\pagestyle{fancy}
\fancyhead[LO]{\footnotesize\shorttitle}
\fancyhead[RO]{\footnotesize\thepage}
\fancyhead[LE]{\footnotesize\thepage}
\fancyhead[RE]{\footnotesize\shortauthors}
\fancyfoot{}

%



 \newtheoremstyle{mytheorem}
 {3pt}
 {3pt}
 {\slshape}
 {}
 {\bfseries}
 {.}
 { }
 {}

\numberwithin{equation}{section}

\theoremstyle{theorem}
\newtheorem{theorem}{Theorem}[section]
\newtheorem*{theorem*}{Theorem}
\newtheorem{corollary}[theorem]{Corollary}
\newtheorem{lemma}[theorem]{Lemma}

\theoremstyle{definition}
\newtheorem{definition}{Definition}[section]

\newtheorem*{example*}{Example}

\theoremstyle{remark}

\newtheorem*{remark*}{Remark}
\newtheorem*{remarks*}{Remarks}

\newcommand{\Keywords}[1]{\ifthenelse{\isempty{#1}}{}{\smallskip \smallskip \noindent \textbf{Keywords}. #1}}
\newcommand{\MSC}[2][2010]{\ifthenelse{\isempty{#2}}{}{\smallskip \smallskip \noindent \textbf{#1MSC}. #2}}
\newcommand{\abstractnote}[1]{\ifthenelse{\isempty{#1}}{}{\smallskip \smallskip \noindent \textsuperscript{\dag}#1}}

\makeatletter
\def\specialsection{\@startsection{section}{1}%
  \z@{\linespacing\@plus\linespacing}{.5\linespacing}%
  {\normalfont}}
\def\section{\@startsection{section}{1}%
  \z@{.7\linespacing\@plus\linespacing}{.5\linespacing}%
  {\normalfont\scshape}}
\patchcmd{\@settitle}{\uppercasenonmath\@title}{\Large\boldmath}{}{}
\patchcmd{\@settitle}{\begin{center}}{\begin{flushleft}}{}{}
\patchcmd{\@settitle}{\end{center}}{\end{flushleft}}{}{}
\patchcmd{\@setauthors}{\MakeUppercase}{\normalsize}{}{}
\patchcmd{\@setauthors}{\centering}{\raggedright}{}{}
\patchcmd{\section}{\scshape}{\large\bfseries\boldmath}{}{}
\patchcmd{\subsection}{\bfseries}{\bfseries\boldmath}{}{}
\renewcommand{\@secnumfont}{\bfseries}
\patchcmd{\@startsection}{\@afterindenttrue}{\@afterindentfalse}{}{}
\patchcmd{\abstract}{\leftmargin3pc}{\leftmargin1pc}{}{}

\def\maketitle{\par
  \@topnum\z@ 
  \@setcopyright
  \thispagestyle{empty}
  \ifx\@empty\shortauthors \let\shortauthors\shorttitle
  \else \andify\shortauthors
  \fi
  \@maketitle@hook
  \begingroup
  \@maketitle
  \toks@\@xp{\shortauthors}\@temptokena\@xp{\shorttitle}%
  \toks4{\def\\{ \ignorespaces}}
  \edef\@tempa{%
    \@nx\markboth{\the\toks4
      \@nx\MakeUppercase{\the\toks@}}{\the\@temptokena}}%
  \@tempa
  \endgroup
  \c@footnote\z@
  \@cleartopmattertags
}
\makeatother



\newcommand{\sD}{\mathscr{D}}
\newcommand{\sP}{\mathscr{P}}


\title{On the $k$-measure of partitions and distinct partitions}

\author[G. E. Andrews]{George E. Andrews}
\address[G. E. Andrews]{Department of Mathematics, The Pennsylvania State University, University Park, PA 16802, USA}
\email{gea1@psu.edu}

\author[S. Chern]{Shane Chern}
\address[S. Chern]{Department of Mathematics, The Pennsylvania State University, University Park, PA 16802, USA}
\email{shanechern@psu.edu; chenxiaohang92@gmail.com}

\author[Z. Li]{Zhitai Li}
\address[Z. Li]{Department of Mathematics, The Pennsylvania State University, University Park, PA 16802, USA}
\email{zfl5082@psu.edu}

\begin{document}

\maketitle

{\noindent\footnotesize\itshape Dedicated to Ian Goulden and David Jackson.}

\begin{abstract}

The $k$-measure of an integer partition was recently introduced by Andrews, Bhattacharjee and Dastidar. In this paper, we establish trivariate generating function identities counting both the length and the $k$-measure for partitions and distinct partitions, respectively. The $2$-measure case for partitions extends a result of Andrews, Bhattacharjee and Dastidar.

\Keywords{Partition, distinct partition, generating function, $k$-measure, Durfee square.}

\MSC{11P84, 05A17.}
\end{abstract}

\section{Introduction}

The study of sequences in partitions has its origin in Sylvester's seminal paper \cite[Item (16), p.~265]{Syl1882}. His result can be translated into the modern language of integer partitions as follows.

\begin{theorem}[Sylvester]
	The number of partitions of $n$ into odd parts with exactly $k$ distinct parts equals the number of partitions of $n$ into distinct parts such that exactly $k$ sequences of consecutive integers occur in each partition.
\end{theorem}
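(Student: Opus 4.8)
The plan is to recast both quantities as coefficients of a single bivariate generating function and then to evaluate that function by a classical $q$-series identity of Lebesgue. With $z$ marking the statistic $k$ and $q$ marking $n$, set
\[
O(z,q)=\sum_{\lambda\ \mathrm{odd}}z^{d(\lambda)}q^{|\lambda|},\qquad
D(z,q)=\sum_{\lambda\ \mathrm{distinct}}z^{r(\lambda)}q^{|\lambda|},
\]
where the first sum is over partitions into odd parts and $d(\lambda)$ is the number of distinct part sizes of $\lambda$, the second sum is over partitions into distinct parts and $r(\lambda)$ is the number of maximal runs of consecutive integers in $\lambda$; the theorem is exactly $O(z,q)=D(z,q)$. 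The series $O$ is immediate, since a partition into odd parts is built by deciding, for each odd $j\ge1$, whether $j$ occurs and with which multiplicity:
\[
O(z,q)=\prod_{i\ge1}\left(1+\frac{zq^{2i-1}}{1-q^{2i-1}}\right).
\]

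For $D$ I would use the run decomposition. Writing a nonempty distinct partition as a disjoint union of runs $[a_i,a_i+\ell_i-1]$, $1\le i\le k$, with $\ell_i\ge1$ and a gap of size at least $2$ between consecutive runs, and recording the slack $g_j\ge0$ in each gap as a free parameter, one finds $|\lambda|=\binom{L}{2}+\sum_i i\ell_i+\sum_j g_j(\ell_j+\cdots+\ell_k)$ with $L=\ell_1+\cdots+\ell_k$. Summing out the $g_j$ and then passing to the partial sums $L_j=\ell_j+\cdots+\ell_k$ (so that $L_1>\cdots>L_k\ge1$ and $L_1$ is the largest part) collapses everything to
\[
D(z,q)=1+\sum_{N\ge1}\frac{zq^{\binom{N+1}{2}}}{1-q^{N}}\prod_{r=1}^{N-1}\left(1+\frac{zq^{r}}{1-q^{r}}\right),
\]
the $1$ being the empty partition and the $N$-th term collecting those $\lambda$ with largest part $N$.

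Now set $a_r=1+\frac{zq^{r}}{1-q^{r}}$ and $A_n=\prod_{r=1}^{n}a_r$. Because $\frac{zq^{\binom{N+1}{2}}}{1-q^{N}}=q^{\binom{N}{2}}(a_N-1)$, the $N$-th summand equals $q^{\binom{N}{2}}(A_N-A_{N-1})$, so a summation by parts (using $q^{\binom{N}{2}}A_N\to0$ and $A_0=1$) gives
\[
D(z,q)=\sum_{N\ge1}q^{\binom{N}{2}}(1-q^{N})A_N
=(1-wq)\sum_{m\ge0}\frac{q^{\binom{m+1}{2}}(wq^{2};q)_m}{(q;q)_m},\qquad w:=1-z,
\]
where I used $A_N=(wq;q)_N/(q;q)_N$ and reindexed $N=m+1$. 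To finish, invoke Lebesgue's identity $\sum_{m\ge0}\frac{q^{\binom{m+1}{2}}(a;q)_m}{(q;q)_m}=(-q;q)_\infty(aq;q^{2})_\infty$ with $a=wq^{2}$; together with $(-q;q)_\infty=1/(q;q^{2})_\infty$ and $(1-wq)(wq^{3};q^{2})_\infty=(wq;q^{2})_\infty$ this yields
\[
D(z,q)=\frac{(wq;q^{2})_\infty}{(q;q^{2})_\infty}
=\prod_{i\ge1}\frac{1-(1-z)q^{2i-1}}{1-q^{2i-1}}
=\prod_{i\ge1}\left(1+\frac{zq^{2i-1}}{1-q^{2i-1}}\right)=O(z,q),
\]
and comparing coefficients of $z^{k}q^{n}$ proves the theorem.

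The step I expect to be the main obstacle is the passage from the run decomposition to the clean single sum for $D$: the reparametrization by partial sums of run lengths and the summation by parts are elementary but must be set up with care, and one then has to recognize the emergent series as a specialization of Lebesgue's identity (equivalently, one may finish by inducting on a truncation of the two infinite products, comparing coefficients of $z^{k}$). A purely bijective proof is also available — Sylvester's graphical correspondence between partitions into odd parts and partitions into distinct parts, in the refined form carrying distinct part sizes to runs — but the generating-function route fits the theme of the present paper more naturally.
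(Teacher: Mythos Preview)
Your argument is essentially correct, and there is nothing to compare it against: the paper does not prove Sylvester's theorem. It is quoted in the introduction purely as historical motivation for the notion of $k$-measure, with a reference to Sylvester's 1882 paper, and no proof is supplied. So your proposal stands on its own rather than as an alternative to anything in the text.

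On the substance: the generating function $O(z,q)=\prod_{i\ge1}\bigl(1+\tfrac{zq^{2i-1}}{1-q^{2i-1}}\bigr)$ is immediate, and your single-sum expression for $D(z,q)$ is correct. One caveat on exposition: in the sentence ``the $N$-th term collecting those $\lambda$ with largest part $N$,'' the index $N=L_1=\ell_1+\cdots+\ell_k$ is the \emph{number of parts} of the original distinct partition $\lambda$ (equivalently, the largest of the transformed parts $L_1>\cdots>L_k$), not the largest part of $\lambda$ itself. With that reading the derivation goes through: after summing out the slacks $g_j$ one is summing $z^{k}q^{\binom{N}{2}+\sum_j L_j}/\prod_j(1-q^{L_j})$ over subsets $\{L_2,\dots,L_k\}\subseteq\{1,\dots,N-1\}$, which factors to give exactly your product $\prod_{r=1}^{N-1}(1+\tfrac{zq^{r}}{1-q^{r}})$ times $zq^{\binom{N+1}{2}}/(1-q^{N})$.

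The summation by parts and the specialization of Lebesgue's identity (which, incidentally, is the same identity the paper records as the ``Bailey--Daum'' special case in its proof of Corollary~\ref{coro:P2}) are both carried out correctly, and the final product manipulation $(1-wq)(-q;q)_\infty(wq^{3};q^{2})_\infty=(wq;q^{2})_\infty/(q;q^{2})_\infty$ is fine. So the proof is sound; just tighten the indexing in the run-decomposition paragraph and correct the description of $N$.
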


Along this road, we also witness the pioneering work of MacMahon \cite[Section VII, Chapter IV, pp.~49--58]{Max1984}, Hirschhorn \cite[Chapter 5, pp.~51--56]{Hir1979} and many others. Briefly speaking, in these work, the main objects are the sequence or certain subsequences of parts in partitions in which some sort of difference conditions are satisfied. From this perspective, Andrews, Bhattacharjee and Dastidar \cite{ABD2021} recently introduced a family of new statistics for partitions, which they called $k$-measure.

\begin{definition}
	The $k$-measure of a partition is the length of the largest subsequence of parts in a partition in which the difference between any two consecutive parts of the subsequence is at least $k$.
\end{definition}

As remarked in \cite{ABD2021}, the $1$-measure of a partition is the number of distinct parts in this partition. The objective of \cite{ABD2021} is to build a surprising connection between the $2$-measure and the Durfee square of partitions.

\begin{theorem}[Andrews et al.]\label{th:ABD}
	The number of partitions of $n$ with $2$-measure $m$ equals the number of partitions of $n$ with Durfee square of length $m$.
\end{theorem}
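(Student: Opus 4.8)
The plan is to compute the generating function $\sum_{n\ge 0} M(n,m)\,q^n$, where $M(n,m)$ denotes the number of partitions of $n$ with $2$-measure $m$, and to identify it with $q^{m^2}/(q;q)_m^2$, the classical generating function for partitions whose Durfee square has side $m$ (split such a partition into its $m\times m$ square, a partition with at most $m$ parts to the right, and a partition with parts $\le m$ below; here $(q;q)_m:=\prod_{i=1}^m(1-q^i)$). Establishing this equality is precisely the theorem.

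The first step is a greedy reading of the $2$-measure. If $\lambda$ has distinct part sizes $d_1>d_2>\cdots$, set $a_1=d_1$ and recursively let $a_{i+1}$ be the largest part of $\lambda$ that is $\le a_i-2$; a one-line exchange argument shows this chain is at least as long as any subsequence of parts with all consecutive gaps $\ge 2$, so its length equals the $2$-measure $m$. The structural point is that the greedy rule skips \emph{at most one} part: between consecutive entries $a_{i+1}<a_i$ the only part of $\lambda$ that can occur is $a_i-1$, and likewise the only part smaller than $a_m$ that can occur is $a_m-1$. This yields a bijection — a partition of $2$-measure $m$ is exactly the data of a chain $a_1>a_2>\cdots>a_m\ge 1$ with $a_i-a_{i+1}\ge 2$, together with, for each $i$ with $a_i\ge 2$, a choice of whether the part $a_i-1$ also occurs, and a multiplicity $\ge 1$ on every part that occurs — so that
\[
\sum_{n\ge 0} M(n,m)\,q^n=\sum_{\substack{a_1>\cdots>a_m\ge 1\\ a_i-a_{i+1}\ge 2}}\ \prod_{i=1}^{m} G(a_i),\qquad G(a)=\frac{q^{a}}{(1-q^{a})(1-q^{a-1})}\ \ (a\ge 2),\qquad G(1)=\frac{q}{1-q},
\]
where the factor $\tfrac{1}{1-q^{a-1}}=1+\tfrac{q^{a-1}}{1-q^{a-1}}$ records the optional part $a-1$ together with its multiplicity.

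The second step is to evaluate this sum by induction on $m$. Writing $\mathcal M_m(N)$ for the same sum restricted to chains with $a_m\ge N$, we have $\mathcal M_m(N)=\sum_{a\ge N}G(a)\,\mathcal M_{m-1}(a+2)$ with $\mathcal M_0(N)=1$. For $N\ge 2$ every factor is in its generic form, and the engine of the computation is the telescoping identity
\[
\frac{q^{m a+m^2-m}}{(1-q^{a-1})(1-q^{a})\cdots(1-q^{a+m-1})}=\frac{\Psi_m(a)-\Psi_m(a+1)}{1-q^{m}},\qquad \Psi_m(a):=\frac{q^{m a+m^2-m}}{(1-q^{a-1})\cdots(1-q^{a+m-2})},
\]
which one checks by clearing denominators: the numerator of $\Psi_m(a)-\Psi_m(a+1)$ equals $q^{m a+m^2-m}\bigl(1-q^{a+m-1}-q^{m}+q^{a+m-1}\bigr)=q^{m a+m^2-m}(1-q^{m})$, the two $a$-dependent terms cancelling exactly because the exponent $ma$ is matched to the $m$ factors in the denominator of $\Psi_m$. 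Summing over $a\ge N$ and feeding the result back into the recursion gives, for $N\ge 2$,
\[
\mathcal M_m(N)=\frac{q^{m(N-1)+m^2}}{(q;q)_m\,(1-q^{N-1})(1-q^{N})\cdots(1-q^{N+m-2})}.
\]
The generating function we want is $\mathcal M_m(1)$; isolating the term with $a_m=1$ yields $\mathcal M_m(1)=G(1)\,\mathcal M_{m-1}(3)+\mathcal M_m(2)$, and substituting the closed form above and simplifying via $(q;q)_m=(1-q^m)(q;q)_{m-1}$ collapses this to $q^{m^2}/(q;q)_m^2$, which is the Durfee-square generating function — completing the proof.

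I expect the main obstacle to be the structural bijection of the first step: one must argue carefully that greedy is optimal and, more delicately, that at most one part is skipped between consecutive chain entries, as this is exactly what makes the product $\prod_i G(a_i)$ valid and forces both the gap condition $a_i-a_{i+1}\ge 2$ and the separate treatment of the boundary value $a_m=1$. The telescoping lemma is routine once $\Psi_m$ is guessed. I would also note that the whole argument carries a variable marking the number of parts of $\lambda$ — one replaces $G(a)$ by its length-weighted analogue and repeats the telescoping — which gives the trivariate refinement that is the real subject of the paper, although this is not needed for the bare statement above.
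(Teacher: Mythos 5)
Your proof is correct, and it takes a genuinely different route from the paper. The paper does not prove Theorem \ref{th:ABD} in isolation; it obtains the refinement (Corollary \ref{coro:P}) by peeling off all parts of size $1,\dots,k$ from a partition with smallest part $1$ to get the $q$-difference equation \eqref{eq:F-dif}, solving that equation coefficient-by-coefficient for $S_k(y)=(yq;q)_\infty F_k(y)$, and then invoking Euler's identities and Heine's third transformation to land on the Durfee-square series $\sum_{n\ge 0} y^nz^nq^{n^2}/\bigl((yq;q)_n(q;q)_n\bigr)$. You instead read off the entire structure of a $2$-measure-$m$ partition at once via the greedy chain $a_1>\dots>a_m$ (largest part, then largest part at least $2$ smaller, etc.), observe that only the parts $a_i-1$ can be interleaved, and evaluate the resulting multi-sum $\sum\prod_i G(a_i)$ by an explicit telescoping of $\Psi_m$ --- all of which I have checked: the greedy-optimality and ``at most one skipped part'' claims are sound, the telescoping numerator computation is right, and the final combination $G(1)\,\mathcal M_{m-1}(3)+\mathcal M_m(2)=q^{m^2}/(q;q)_m^2$ works out. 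What your approach buys is elementarity and transparency: no Heine transformation, no $q$-difference equation, and (as you note) the length statistic rides along for free, so you essentially recover Corollary \ref{coro:P} as well. What the paper's approach buys is uniformity in $k$: the same $q$-difference equation handles every $k$-measure simultaneously and produces the two closed forms \eqref{eq:P-main} and \eqref{eq:P-main-2}, whereas your chain decomposition, while it does generalize (delete gaps of size $\ge k$ and allow up to $k-1$ interleaved part sizes per chain entry), would make the telescoping step considerably messier for $k>2$. One small presentational point: you should state explicitly that the chosen optional parts $a_i-1$ are pairwise distinct and distinct from all $a_j$ (this follows from the gap condition $a_i-a_{i+1}\ge 2$), since that is what justifies multiplying the factors $G(a_i)$ independently.
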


Throughout, let $\sP$ and $\sD$ denote the set of partitions and the set of partitions into distinct parts, respectively.

Given any partition $\lambda$, we denote by $|\lambda|$ the size, that is, the sum of all parts of $\lambda$, by $\ell(\lambda)$ the length, that is, the number of parts of $\lambda$, and by $\mu_k(n)$ the $k$-measure of $\lambda$ for positive integer $k$.

Let the $q$-Pochhammer symbol be defined for $n\in \mathbb{N}\cup\{\infty\}$:
$$(A;q)_n:=\prod_{k=0}^{n-1}(1-Aq^k).$$

Our first objective is to establish trivariate generating function identities for partitions that enumerates both the length and the $k$-measure.

\begin{theorem}\label{th:P-main}
	For $k\ge 1$,
	\begin{equation}\label{eq:P-main}
	\sum_{\lambda\in\sP} y^{\ell(\lambda)} z^{\mu_k(\lambda)} q^{|\lambda|} = \frac{1}{(yq;q)_\infty}\sum_{n\ge 0}\frac{(-1)^n y^n q^{n(n+1)/2} (z;q^{k-1})_n}{(q;q)_n}.
	\end{equation}
	Also, assuming that $|z|<1$,
	\begin{equation}\label{eq:P-main-2}
	\sum_{\lambda\in\sP} y^{\ell(\lambda)} z^{\mu_k(\lambda)} q^{|\lambda|} = (z;q^{k-1})_\infty \sum_{n\ge 0}\frac{z^n}{(q^{k-1};q^{k-1})_n (yq;q)_{(k-1)n}}.
	\end{equation}
\end{theorem}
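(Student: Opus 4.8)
The plan is to classify each partition $\lambda \in \sP$ according to the side length of its Durfee square, or rather according to a closely related combinatorial decomposition that makes the $k$-measure transparent. Recall the key idea behind the ABD connection (Theorem~\ref{th:ABD}): a subsequence of parts with consecutive gaps $\ge k$ can be extracted greedily, and its maximal length is governed by how many ``well-separated'' parts one can fit. First I would set up the following bookkeeping. Write a partition as a weakly decreasing sequence; I want to peel off, from the \emph{distinct} parts of $\lambda$, a maximal $k$-separated subsequence. Concretely, let $d_1 > d_2 > \cdots > d_m$ be chosen greedily (take the largest part, then the largest part that is at least $k$ smaller, and so on); then $m = \mu_k(\lambda)$. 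The remaining parts of $\lambda$, together with the ``excess'' of the $d_i$'s over the minimal staircase needed to realize $m$ steps, form an arbitrary partition subject only to a bounding constraint. The first main step is to make this decomposition a genuine bijection and to read off how $y^{\ell}$, $z^{\mu_k}$, $q^{|\lambda|}$ distribute across the pieces.

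For the first identity \eqref{eq:P-main}, I would instead argue more slickly via a Durfee-square-type dissection combined with inclusion–exclusion on the $k$-measure. The factor $1/(yq;q)_\infty = \sum_{\lambda} y^{\ell(\lambda)} q^{|\lambda|}$ is the generating function for all partitions tracking length; the sum $\sum_{n\ge 0} \frac{(-1)^n y^n q^{n(n+1)/2} (z;q^{k-1})_n}{(q;q)_n}$ should be interpreted as: pick $n$ ``marked'' parts forming a strict staircase of minimal size $1 + 2 + \cdots + n = n(n+1)/2$ (hence the $q^{n(n+1)/2}$ and $y^n$), allow them to spread out via the $1/(q;q)_n$ factor after the staircase is absorbed, and use $(-1)^n$ with $(z;q^{k-1})_n = \prod_{j=0}^{n-1}(1 - z q^{(k-1)j})$ to run an inclusion–exclusion that converts ``at least $n$ parts that are pairwise $\ge k$-separated in a certain normalized sense'' into ``exactly $\mu_k = m$.'' The cleanest route is probably to first prove the finite/Durfee-square analogue — i.e., show that if $\lambda$ has Durfee square side $n$ then the generating function for the pair (arm partition to the right, leg partition below, with the $k$-measure computed on the full $\lambda$) factors appropriately — and then sum over $n$. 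I expect the combinatorial heart is identifying the correct statistic on the Durfee decomposition that equals $\mu_k$; this is where the hypothesis $k \ge 1$ and the appearance of $q^{k-1}$ (gaps of $k$ among parts translate to gaps of $k-1$ after the staircase normalization) must be handled with care.

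Once \eqref{eq:P-main} is established, I would derive \eqref{eq:P-main-2} from it by a purely $q$-series manipulation. The right-hand side of \eqref{eq:P-main-2}, namely $(z;q^{k-1})_\infty \sum_{n\ge 0} \frac{z^n}{(q^{k-1};q^{k-1})_n (yq;q)_{(k-1)n}}$, has the shape of a Cauchy-type / $q$-Gauss-type rearrangement of \eqref{eq:P-main}. Concretely, I would expand $(z;q^{k-1})_n$ in \eqref{eq:P-main} by the $q$-binomial theorem, interchange the order of summation (legitimate for $|z|<1$, which is exactly the stated hypothesis), resum the inner series over $n$ using Euler's identities $\sum_n \frac{(-1)^n y^n q^{n(n+1)/2}}{(q;q)_n}(\cdots) = (yq;q)_\infty \cdot(\cdots)$, and recognize the $1/(yq;q)_\infty$ cancellation together with the emergence of the $(q^{k-1};q^{k-1})_n$ and $(yq;q)_{(k-1)n}$ denominators. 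Alternatively — and this may be the more robust path — I would give \eqref{eq:P-main-2} its own direct combinatorial proof by grouping partitions according to $m = \mu_k(\lambda)$: the prefactor $(z;q^{k-1})_\infty$ absorbed against $z^m$ sieves out exactly the ``$\ge k$-separated core'' of size at least $m(m-1)/2 \cdot$ (in the $q^{k-1}$ scale), while $1/(yq;q)_{(k-1)m}$ counts the remaining parts which are confined to lie in the ``gaps'' of width $k-1$ between consecutive core parts. The main obstacle throughout is proving rigorously that the greedy $k$-separated subsequence has length exactly $\mu_k(\lambda)$ and that the residual parts are genuinely unconstrained apart from the stated bound — i.e., verifying that the decomposition is a bijection, not just an injection — so I would allocate the bulk of the proof to that lemma and treat the $q$-series identities \eqref{eq:P-main} $\leftrightarrow$ \eqref{eq:P-main-2} as the comparatively routine (if delicate) second half.
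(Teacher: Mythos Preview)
Your derivation of \eqref{eq:P-main-2} from \eqref{eq:P-main} by expanding $(z;q^{k-1})_n$, interchanging sums, and resumming via Euler's identities is exactly what the paper does, so that half is fine. The gap is in \eqref{eq:P-main} itself. Both routes you sketch --- the greedy peel-off of a maximal $k$-separated subsequence and the Durfee/inclusion--exclusion idea --- stall at precisely the point you flag. After removing the greedy chain $d_1 > \cdots > d_m$, the residual parts are \emph{not} an arbitrary partition subject only to a simple bound: the condition ``no $k$-separated subsequence of length $m+1$ exists in $\lambda$'' couples the residual parts to the $d_i$ in a way that does not factor, since a residual part together with some of the $d_i$ could form a longer chain. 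Your inclusion--exclusion reading of $(-1)^n(z;q^{k-1})_n$ is likewise not a recognizable sieve: you have not named the overcounted family that the signs correct (``at least $n$ pairwise $k$-separated parts'' is not what $(z;q^{k-1})_n$ weights), and ``I expect the combinatorial heart is identifying the correct statistic'' is an acknowledgment that the key lemma is missing rather than a plan for supplying it.

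The paper sidesteps all of this by never attempting a global decomposition. Instead it derives a $q$-difference equation: split partitions by whether the smallest part equals $1$, and observe that if $\sigma(\lambda)=1$ then deleting \emph{all} parts of size $1,2,\ldots,k$ yields $\lambda'$ with $\sigma(\lambda')\ge k+1$ and $\mu_k(\lambda') = \mu_k(\lambda)-1$ (the part $1$ can be appended to any maximal $k$-chain in $\lambda'$, and at most one of $1,\ldots,k$ can appear in any $k$-chain). This gives
\[
F_k(y) - F_k(yq) \;=\; \frac{yzq}{(yq;q)_k}\,F_k(yq^k),
\]
and setting $S_k(y)=(yq;q)_\infty F_k(y)$ converts it into the first-order coefficient recursion $s_k(n)/s_k(n-1) = -q^n(1-zq^{(k-1)(n-1)})/(1-q^n)$, which telescopes to \eqref{eq:P-main}. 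The only combinatorics needed is the local fact $\mu_k(\lambda)=\mu_k(\lambda')+1$, a two-line argument, rather than the global bijection you are trying to build.
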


In a combinatorial viewpoint, the $2$-measure case of the above result yields an extension of Theorem \ref{th:ABD}.

\begin{corollary}\label{coro:P}
	The number of partitions of $n$ with $\ell$ parts and $2$-measure equal to $m$ is the same as the number of partitions of $n$ with $\ell$ parts and Durfee square of length $m$.
\end{corollary}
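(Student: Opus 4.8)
The plan is to deduce the corollary by specializing Theorem~\ref{th:P-main} at $k=2$, matching the resulting series against an independently computed trivariate generating function attached to the Durfee statistic, and then comparing coefficients of $y^\ell z^m q^n$. For the Durfee side, write $d(\lambda)$ for the side length of the Durfee square of $\lambda$ and peel a partition with $d(\lambda)=n$ into three pieces: its $n\times n$ Durfee square (size $n^2$, accounting for exactly $n$ parts of $\lambda$), the sub-partition lying to the right of the square (which has at most $n$ parts and creates no new parts of $\lambda$, hence contributes the factor $1/(q;q)_n$), and the sub-partition lying below the square (all of whose parts are $\le n$, each a genuine new part of $\lambda$, contributing $1/(yq;q)_n$). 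This yields
\[
\sum_{\lambda\in\sP} y^{\ell(\lambda)} z^{d(\lambda)} q^{|\lambda|} \;=\; \sum_{n\ge 0}\frac{(yz)^n\, q^{n^2}}{(q;q)_n\,(yq;q)_n}.
\]

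On the other hand, putting $k=2$ in \eqref{eq:P-main-2} gives $\sum_{\lambda\in\sP} y^{\ell(\lambda)} z^{\mu_2(\lambda)} q^{|\lambda|} = (z;q)_\infty \sum_{n\ge 0} z^n/\big((q;q)_n (yq;q)_n\big)$. Thus the corollary follows once one proves the identity
\[
\sum_{n\ge 0}\frac{(yz)^n\, q^{n^2}}{(q;q)_n\,(yq;q)_n} \;=\; (z;q)_\infty \sum_{n\ge 0}\frac{z^n}{(q;q)_n\,(yq;q)_n}.
\]
To do this I would expand $(z;q)_\infty=\sum_{j\ge0}(-1)^j q^{\binom j2} z^j/(q;q)_j$ by Euler's theorem, multiply the two series on the right, and extract the coefficient of $z^m$; after clearing a factor $(q;q)_m$ this reduces the identity to the finite relation
\[
\sum_{n=0}^{m}(-1)^{m-n}q^{\binom{m-n}{2}}\binom{m}{n}_q \frac{1}{(yq;q)_n} \;=\; \frac{y^m q^{m^2}}{(yq;q)_m}.
\]

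By $q$-binomial inversion this finite relation is equivalent to
\[
\frac{1}{(yq;q)_n} \;=\; \sum_{m=0}^{n}\binom{n}{m}_q \frac{y^m q^{m^2}}{(yq;q)_m},
\]
and the latter has a short combinatorial proof: a partition $\nu$ with every part $\le n$ automatically has Durfee side $\le n$, so decomposing $\nu$ through its Durfee square (a square of side $m\le n$; a partition fitting in an $m\times(n-m)$ box to the right, contributing $\binom nm_q$; a partition with parts $\le m$ below, contributing $1/(yq;q)_m$ once $y^{\ell(\nu)}$ is tracked) produces exactly the right-hand sum, while the left-hand side is $\prod_{i=1}^{n}(1-yq^i)^{-1}$. (Equivalently, the finite identity is a limiting case of the $q$-Chu--Vandermonde summation.)

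With the two trivariate generating functions shown to agree, comparing the coefficient of $y^\ell z^m q^n$ on each side gives the stated equinumeracy. The only real obstacle is this middle step --- spotting the correct finite $q$-identity and supplying its combinatorial (or $q$-Chu--Vandermonde) proof; the remaining manipulations are routine. One small caveat: \eqref{eq:P-main-2} is stated under $|z|<1$, so for the coefficient comparison one should either regard everything as a power series in $z$ and $q$, or work instead with the always-valid form \eqref{eq:P-main}, which after expanding $(z;q)_n$ leads to the same finite identity.
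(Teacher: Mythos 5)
Your proposal is correct, and it shares the paper's overall skeleton: specialize \eqref{eq:P-main-2} at $k=2$, compute $\sum_{\lambda} y^{\ell(\lambda)}z^{d(\lambda)}q^{|\lambda|}=\sum_{n\ge0}(yz)^nq^{n^2}/\bigl((q;q)_n(yq;q)_n\bigr)$ by the same three-block Durfee dissection, and compare coefficients of $y^\ell z^m q^n$. Where you genuinely diverge is in proving the bridging identity $\sum_{n\ge0}(yz)^nq^{n^2}/\bigl((q;q)_n(yq;q)_n\bigr)=(z;q)_\infty\sum_{n\ge0}z^n/\bigl((q;q)_n(yq;q)_n\bigr)$. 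The paper gets this as a $\tau\to0$ limiting case of Heine's third transformation for ${}_2\phi_1$; you instead expand $(z;q)_\infty$ by Euler's second identity \eqref{eq:Eul2}, extract the coefficient of $z^m$, and reduce to a finite relation which, after $q$-binomial inversion (the standard pair driven by the orthogonality $\sum_{k=0}^{N}(-1)^kq^{k(k-1)/2}\tbinom{N}{k}_q=\delta_{N,0}$), becomes the classical finite Durfee dissection $1/(yq;q)_n=\sum_{m=0}^{n}\tbinom{n}{m}_q\,y^mq^{m^2}/(yq;q)_m$, proved by dissecting partitions with all parts at most $n$ through their Durfee squares. I checked the details: the coefficient extraction does produce exactly your finite relation after clearing $(q;q)_m$, the inversion is applied in the right direction, and the combinatorial proof of the inverted identity is sound (the block to the right of the square lives in an $m\times(n-m)$ box, the block below has parts at most $m$ and carries the $y$'s). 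Your route is more elementary --- it avoids the ${}_2\phi_1$ machinery entirely and replaces it with a second Durfee argument, which is pleasingly in the spirit of the combinatorial proofs the paper's conclusion asks for --- at the cost of being longer than simply quoting Heine. Your closing caveat about $|z|<1$ is handled correctly by regarding everything as formal power series.
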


We also have a corollary that builds a connection with partitions into distinct odd parts.

\begin{corollary}\label{coro:P2}
	The excess of the number of partitions $\lambda$ of $n$ with $\ell(\lambda)+\mu_2(\lambda)$ even over those with $\ell(\lambda)+\mu_2(\lambda)$ odd equals the number of partitions of $n$ into distinct odd parts.
\end{corollary}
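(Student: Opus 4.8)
The plan is to derive the corollary by specializing Theorem~\ref{th:P-main} in the case $k=2$ at $y=z=-1$: the left-hand side then becomes $\sum_{\lambda\in\sP}(-1)^{\ell(\lambda)+\mu_2(\lambda)}q^{|\lambda|}$, and the coefficient of $q^n$ in this series is precisely the excess described in the statement. It is most convenient to start from \eqref{eq:P-main-2}. Taking $k=2$ there gives
\[
\sum_{\lambda\in\sP} y^{\ell(\lambda)} z^{\mu_2(\lambda)} q^{|\lambda|} = (z;q)_\infty \sum_{n\ge 0}\frac{z^n}{(q;q)_n (yq;q)_n},
\]
and I would first put $y=-1$. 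Since $(q;q)_n(-q;q)_n=(q^2;q^2)_n$, Euler's identity $\sum_{n\ge0} z^n/(q^2;q^2)_n = 1/(z;q^2)_\infty$ applies, and combined with the factorization $(z;q)_\infty=(z;q^2)_\infty(zq;q^2)_\infty$ the whole right-hand side collapses, yielding
\[
\sum_{\lambda\in\sP}(-1)^{\ell(\lambda)} z^{\mu_2(\lambda)} q^{|\lambda|} = (zq;q^2)_\infty = \prod_{j\ge 0}\bigl(1-zq^{2j+1}\bigr).
\]

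The second step is to put $z=-1$. Here one must be a little careful, because \eqref{eq:P-main-2} is only stated for $|z|<1$, so this is a boundary value; but after the simplification above both sides are, for each fixed power of $q$, polynomials in $z$ (on the left only finitely many partitions of a given size contribute, and on the right only the factors with $2j+1\le n$ affect the coefficient of $q^n$), and two polynomials agreeing on the disc $|z|<1$ agree identically. Hence we may specialize $z=-1$ legitimately, obtaining
\[
\sum_{\lambda\in\sP}(-1)^{\ell(\lambda)+\mu_2(\lambda)} q^{|\lambda|} = (-q;q^2)_\infty = \prod_{j\ge 0}\bigl(1+q^{2j+1}\bigr).
\]

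Finally I would extract the coefficient of $q^n$ from each side. On the left it equals $\#\{\lambda\vdash n:\ell(\lambda)+\mu_2(\lambda)\text{ even}\}-\#\{\lambda\vdash n:\ell(\lambda)+\mu_2(\lambda)\text{ odd}\}$, which is the claimed excess; on the right, $\prod_{j\ge0}(1+q^{2j+1})$ is the classical generating function for partitions of $n$ into distinct odd parts. Equating the two yields the corollary. The only point requiring genuine attention is the passage to $z=-1$, handled by the polynomiality remark; the remainder is a routine consequence of Theorem~\ref{th:P-main} together with Euler's identity. (One could instead specialize \eqref{eq:P-main} at $y=z=-1$, but that route demands the less transparent auxiliary evaluation $\sum_{n\ge0}q^{n(n+1)/2}(-1;q)_n/(q;q)_n=(-q;q)_\infty(-q;q^2)_\infty$, so \eqref{eq:P-main-2} is the preferable starting point.)
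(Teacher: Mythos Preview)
Your proof is correct but follows a different route from the paper's. The paper specializes \eqref{eq:P-main} (not \eqref{eq:P-main-2}) at $y=z=-1$ and then invokes the Bailey--Daum special case
\[
\sum_{n\ge 0}\frac{(a;q)_n q^{n(n+1)/2}}{(q;q)_n} = (-q;q)_\infty (aq;q^2)_\infty
\]
with $a=-1$ to collapse the sum; this is exactly the ``less transparent auxiliary evaluation'' you mention in your closing parenthesis. Your approach via \eqref{eq:P-main-2} is more elementary, needing only Euler's identity and the factorization $(z;q)_\infty=(z;q^2)_\infty(zq;q^2)_\infty$, and it yields the pleasant intermediate identity $\sum_{\lambda\in\sP}(-1)^{\ell(\lambda)} z^{\mu_2(\lambda)} q^{|\lambda|} = (zq;q^2)_\infty$, which refines the corollary. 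The trade-off is that you must justify the boundary specialization $z=-1$ (which you handle cleanly by the coefficient-wise polynomiality argument), whereas the paper's route through \eqref{eq:P-main} carries no convergence hypothesis on $z$ and so avoids that step entirely.
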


Our next objective is to investigate an analog for partitions into distinct parts.

\begin{theorem}\label{th:D-main}
	For $k\ge 1$,
	\begin{equation}
	\sum_{\lambda\in\sD} y^{\ell(\lambda)} z^{\mu_k(\lambda)} q^{|\lambda|} = (-yq;q)_\infty \sum_{n\ge 0}\frac{(-1)^n y^n q^n (z;q^k)_n}{(q;q)_n}.
	\end{equation}
	Also, assuming that $|z|<1$,
	\begin{equation}\label{eq:D-main-2}
	\sum_{\lambda\in\sD} y^{\ell(\lambda)} z^{\mu_k(\lambda)} q^{|\lambda|} = (z;q^k)_\infty \sum_{n\ge 0}\frac{(-yq;q)_{kn} z^n}{(q^{k};q^{k})_n}.
	\end{equation}
\end{theorem}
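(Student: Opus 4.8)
\emph{Proof strategy.} The plan is to carry out the distinct-parts analogue of the argument used to prove Theorem~\ref{th:P-main}, the new ingredient being a canonical dissection of a partition into distinct parts according to its $k$-measure. Given $\lambda=(\lambda_1>\lambda_2>\cdots)\in\sD$, process the parts from the top: set $b_1=\lambda_1$, and having chosen $b_i$ let $b_{i+1}$ be the largest part of $\lambda$ with $b_{i+1}\le b_i-k$, stopping after $m$ steps when no such part remains. A standard exchange argument shows this greedy chain is optimal, so $m=\mu_k(\lambda)$. Since $b_{i+1}$ is the \emph{largest} part not exceeding $b_i-k$, every part of $\lambda$ other than the $b_i$ lies in one of the pairwise disjoint windows $\{b_i-1,b_i-2,\dots,b_i-k+1\}$; conversely, any strictly decreasing sequence $b_1>\cdots>b_m\ge1$ with $b_i-b_{i+1}\ge k$, enriched by an arbitrary subset $S_i$ of the $i$-th window (truncated to the positive integers, which only affects the last window), reconstructs a unique $\lambda\in\sD$ with $\mu_k(\lambda)=m$. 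This bijection is the heart of the proof; note that the windows have width $k-1$ inside blocks of span $k$, which is the feature responsible for the exponents $q^{k}$ and $q^{n}$ here in place of the $q^{k-1}$ and $q^{n(n+1)/2}$ of Theorem~\ref{th:P-main}.

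Translating weights, one gets $\sum_{\lambda\in\sD}y^{\ell(\lambda)}z^{\mu_k(\lambda)}q^{|\lambda|}=\sum_{m\ge0}z^mG_m$, where $G_0=1$ and $G_m$ is the sum over admissible $(b_1,\dots,b_m)$ of $\prod_{i=1}^m yq^{b_i}(-yq^{b_i-k+1};q)_{k-1}$, with the convention that Pochhammer factors of nonpositive exponent are discarded. After the staircase substitution $b_i=c_i+k(m-i)$ with $c_1\ge c_2\ge\cdots\ge c_m\ge1$, the shape factor $q^{k\binom m2}$ detaches and, since the windows are $k$-separated, the residual sum over the $c_i$ is a geometric-type multiple series. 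Summing it and feeding the result into $\sum_m z^mG_m$, I expect the factor $(z;q^k)_n$ and the factor $(-yq;q)_\infty$ (the generating function of all of $\sD$, which is the $z=1$ value) to emerge after applying Euler's two classical identities and the $q$-binomial theorem, yielding the first identity. Two checks should keep this on track: at $z=1$ the series on the right collapses to its $n=0$ term and the identity degenerates to $(-yq;q)_\infty=\sum_{\lambda\in\sD}y^{\ell}q^{|\lambda|}$; and at $k=1$ the windows are empty (so $\mu_1=\ell$ on $\sD$) and the identity is a rephrasing of the $q$-binomial theorem, with the dissection side summed by Euler's identity, both sides equalling $(-yzq;q)_\infty$.

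For the second identity one assumes $|z|<1$ and transforms the first. Its right-hand side is essentially a ${}_2\phi_1$ in base $q^k$; applying the relevant transformation — a Heine-type transformation, or, what amounts to the same thing at $k=1$, a second appeal to the $q$-binomial theorem — interchanges the roles of the $(z;q^k)$-factor and the $(-yq;q)$-factor and produces $(z;q^k)_\infty\sum_{n\ge0}(-yq;q)_{kn}z^n/(q^k;q^k)_n$. Alternatively, \eqref{eq:D-main-2} can be read off the dissection directly: group the distinct partitions by an auxiliary index $n\ge\mu_k(\lambda)$, use $\sum_{n\ge0}z^n/(q^k;q^k)_n=1/(z;q^k)_\infty$ to absorb the tail, identify $(-yq;q)_{kn}$ as the generating function for the distinct parts at levels $\le kn$, and reinstate the prefactor via Euler's expansion $(z;q^k)_\infty=\sum_{m\ge0}(-1)^mq^{k\binom m2}z^m/(q^k;q^k)_m$.

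The routine parts are the bijectivity of the dissection and the exponent bookkeeping. The main obstacle is the $q$-series step: pinning down the exact classical identity (or chain of identities) that resums the multiple series coming from the dissection into the stated closed form, and the transformation linking the two forms. This is the same flavor of analytic manipulation already needed for Theorem~\ref{th:P-main}, so one expects the same toolkit — Euler's identities, the $q$-binomial theorem, and a Heine-type transformation — to suffice.
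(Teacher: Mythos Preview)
Your greedy dissection is correct and gives a genuine bijection, but the proposal is a strategy rather than a proof: the central step---summing the multi-series $\sum_m z^m G_m$ into the closed form $(-yq;q)_\infty\sum_n(-1)^ny^nq^n(z;q^k)_n/(q;q)_n$---is left at the level of ``I expect.'' That summation is not routine: the top-down greedy forces truncation of the bottom window when $b_m<k$, so the factors are not uniform in $m$, and even after the staircase shift the resulting sum is not a single-variable geometric series. Your claim that the first right-hand side is ``essentially a ${}_2\phi_1$ in base $q^k$'' is also off: the series mixes bases $q$ and $q^k$ (numerator $(z;q^k)_n$, denominator $(q;q)_n$), so it is bibasic, and a standard Heine transformation does not apply directly. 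The sanity checks at $z=1$ and $k=1$ are reassuring but do not substitute for the missing computation.

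The paper takes a different and shorter route that sidesteps the multi-sum entirely. It first derives, by elementary combinatorics on the \emph{smallest} part, the $q$-difference equation
\[
G_k(y)-G_k(yq)=yzq\,(-yq^2;q)_{k-1}\,G_k(yq^k),
\]
then sets $T_k(y)=G_k(y)/(-yq;q)_\infty$, reads off a first-order recurrence for the coefficients $t_k(n)$, and solves it in closed form to obtain the first identity. The second identity then follows from the first by writing $(z;q^k)_n=(z;q^k)_\infty/(zq^{kn};q^k)_\infty$, expanding the reciprocal by Euler's identity \eqref{eq:Eul1}, swapping sums, and summing the inner series by \eqref{eq:Eul1} again---no Heine transformation is needed. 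Your bijective decomposition is more illuminating combinatorially, but to turn it into a proof you would need either to carry out the summation explicitly (perhaps switching to the bottom-up greedy, which avoids the truncation) or to show that your $G_m$ satisfy a recurrence equivalent to the paper's $q$-difference equation.
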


Theorems \ref{th:P-main} and \ref{th:D-main} immediately reveal two unexpected nonnegativity results.

\begin{corollary}\label{coro:nonnegative}
	Let $k\ge 1$. In the series expansions of
	$$\frac{1}{(yq;q)_\infty}\sum_{n\ge 0}\frac{(-1)^n y^n q^{n(n+1)/2} (z;q^k)_n}{(q;q)_n}$$
	and
	$$(-yq;q)_\infty \sum_{n\ge 0}\frac{(-1)^n y^n q^n (z;q^k)_n}{(q;q)_n},$$
	the coefficient of $y^\ell z^m q^n$ is always nonnegative.
\end{corollary}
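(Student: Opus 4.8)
The plan is to recognize that the two displayed series are, up to a harmless reindexing, exactly the right-hand sides of generating-function identities already established in Theorems \ref{th:P-main} and \ref{th:D-main}, whose left-hand sides are manifestly series with nonnegative coefficients.

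For the first series, I would apply \eqref{eq:P-main} with $k$ replaced by $k+1$; this is legitimate since Theorem \ref{th:P-main} holds for all integers $\ge 1$ and $k+1\ge 2$, and the right-hand side of \eqref{eq:P-main} then becomes precisely the first series, because $(z;q^{(k+1)-1})_n=(z;q^k)_n$. Hence the first series equals $\sum_{\lambda\in\sP} y^{\ell(\lambda)} z^{\mu_{k+1}(\lambda)} q^{|\lambda|}$. The second series is literally the right-hand side of the first identity in Theorem \ref{th:D-main}, so it equals $\sum_{\lambda\in\sD} y^{\ell(\lambda)} z^{\mu_{k}(\lambda)} q^{|\lambda|}$.

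In each case the expression is a sum over a family of partitions in which every partition $\lambda$ contributes the single monomial $y^{\ell(\lambda)}z^{\mu(\lambda)}q^{|\lambda|}$ with coefficient $1$, and $\ell(\lambda),\mu(\lambda),|\lambda|$ are all nonnegative integers; moreover, for each power of $q$ only finitely many terms of the $n$-sum contribute (owing to the factor $q^{n(n+1)/2}$, resp.\ $q^n$), so the $q$-expansion is a bona fide formal power series with coefficients polynomial in $y$ and $z$. Therefore the coefficient of $y^\ell z^m q^n$ is the number of partitions of $n$ in $\sP$ (resp.\ $\sD$) with exactly $\ell$ parts and $(k+1)$-measure (resp.\ $k$-measure) equal to $m$, which is a nonnegative integer. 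The only point requiring care is the parameter shift $k\mapsto k+1$ in the first case; past that, nonnegativity is immediate from the combinatorial meaning of the two identities, so I do not anticipate a genuine obstacle.
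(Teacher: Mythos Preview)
Your proposal is correct and matches the paper's approach: the paper itself offers no separate proof, merely stating that the corollary follows immediately from Theorems \ref{th:P-main} and \ref{th:D-main}. You have correctly spotted the one subtlety, namely that the first series carries $(z;q^k)_n$ rather than the $(z;q^{k-1})_n$ appearing in \eqref{eq:P-main}, so that Theorem \ref{th:P-main} must be invoked with $k$ replaced by $k+1$; the second series is literally the right-hand side in Theorem \ref{th:D-main}, and nonnegativity then follows from the combinatorial interpretation of the left-hand sides.
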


\section{$q$-Difference equations}

Throughout, we define for $k\ge 1$,
\begin{align*}
F_k(y)=F_k(y,z,q)&:=\sum_{\lambda\in\sP} y^{\ell(\lambda)} z^{\mu_k(\lambda)} q^{|\lambda|},\\
G_k(y)=G_k(y,z,q)&:=\sum_{\lambda\in\sD} y^{\ell(\lambda)} z^{\mu_k(\lambda)} q^{|\lambda|}.
\end{align*}
Our task in this section is to construct $q$-difference equations for $F_k(y)$ and $G_k(y)$.

Let $\sigma(\lambda)$ be the smallest part of a partition $\lambda$. We start with a trivial observation. If we add a positive integer $j$ to each part of $\lambda$, then the resulting partition $\lambda^*$ has $\ell(\lambda^*)=\ell(\lambda)$, $\mu_k(\lambda^*)=\mu_k(\lambda)$ and $\sigma(\lambda^*)\ge j+1$. Therefore,
\begin{align*}
\sum_{\substack{\lambda\in\sP\\\sigma(\lambda)\ge j}} y^{\ell(\lambda)} z^{\mu_k(\lambda)} q^{|\lambda|} &= F_k(yq^{j-1}),\\
\sum_{\substack{\lambda\in\sD\\\sigma(\lambda)\ge j}} y^{\ell(\lambda)} z^{\mu_k(\lambda)} q^{|\lambda|} &= G_k(yq^{j-1}).
\end{align*}
Thus,
\begin{align}
F_k(y)-F_k(yq) &= \sum_{\substack{\lambda\in\sP\\\sigma(\lambda)= 1}} y^{\ell(\lambda)} z^{\mu_k(\lambda)} q^{|\lambda|},\label{eq:F1}\\
G_k(y)-G_k(yq) &= \sum_{\substack{\lambda\in\sD\\\sigma(\lambda)= 1}} y^{\ell(\lambda)} z^{\mu_k(\lambda)} q^{|\lambda|}.\label{eq:G1}
\end{align}

On the other hand, given a partition $\lambda$ with $\sigma(\lambda)=1$, we delete all parts of size $1,2\ldots,k$ and call the resulting partition $\lambda'$. Then, $\sigma(\lambda')\ge k+1$. We also claim that $\mu_k(\lambda)=\mu_k(\lambda')+1$. To see this, we assume that $\mu_k(\lambda')=m$, which means that there exists an $m$-member subsequence $S$ of parts in $\lambda'$ such that the difference between any two consecutive parts of this subsequence is at least $k$. Also, no such $(m+1)$-member subsequence exists. Note that all parts in $S$ are at least $k+1$. Since $1$ is a part in $\lambda$, then $S\cup\{1\}$ gives an $(m+1)$-member subsequence of parts in $\lambda$ so that the requirement of $k$-measure is satisfied. Thus, $\mu_k(\lambda)\ge m+1 = \mu_k(\lambda')+1$. To show $\mu_k(\lambda)\le m+1$, we simply assume that these is an $(m+2)$-member subsequence of parts in $\lambda$ with the requirement of $k$-measure satisfied. Since $1,2,\ldots,k$ cannot be simultaneously in this subsequence, then we arrive at a subsequence of length at least $m+1$ of parts in $\lambda'$ wherein the requirement of $k$-measure is satisfied. This results in a contradiction. Therefore, we are led to the desired claim $\mu_k(\lambda)=\mu_k(\lambda')+1$. Thus,
\begin{align}
\sum_{\substack{\lambda\in\sP\\\sigma(\lambda)= 1}} y^{\ell(\lambda)} z^{\mu_k(\lambda)} q^{|\lambda|}&=\frac{yzq}{(1-yq)(1-yq^2)\cdots (1-yq^k)}\sum_{\substack{\lambda\in\sP\\\sigma(\lambda)\ge k+1}} y^{\ell(\lambda)} z^{\mu_k(\lambda)} q^{|\lambda|}\notag\\
&=\frac{yzq}{(yq;q)_k}F_k(yq^k)\label{eq:F2}
\end{align}
and
\begin{align}
\sum_{\substack{\lambda\in\sD\\\sigma(\lambda)= 1}} y^{\ell(\lambda)} z^{\mu_2(\lambda)} q^{|\lambda|}&=yzq(1+yq^2)(1+yq^3)\cdots (1+yq^k)\sum_{\substack{\lambda\in\sD\\\sigma(\lambda)\ge k+1}} y^{\ell(\lambda)} z^{\mu_2(\lambda)} q^{|\lambda|}\notag\\
&=yzq(-yq^2;q)_{k-1}G_k(yq^k).\label{eq:G2}
\end{align}

In light to \eqref{eq:F1}--\eqref{eq:G2}, we have the following $q$-difference equations.

\begin{lemma}
	For $k\ge 1$,
	\begin{equation}\label{eq:F-dif}
	F_k(y) - F_k(yq) = \frac{yzq}{(yq;q)_k}F_k(yq^k)
	\end{equation}
	and
	\begin{equation}\label{eq:G-dif}
	G_k(y) - G_k(yq) = yzq(-yq^2;q)_{k-1}G_k(yq^k).
	\end{equation}
\end{lemma}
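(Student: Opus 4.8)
The statement follows immediately from the four identities \eqref{eq:F1}, \eqref{eq:F2}, \eqref{eq:G1} and \eqref{eq:G2} already established above, so the plan is simply to equate, for each of $F_k$ and $G_k$, the two expressions that have now been obtained for the generating function of partitions (respectively, distinct partitions) whose smallest part equals $1$.

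Concretely, to get \eqref{eq:F-dif} I would read off from \eqref{eq:F1} that
$$\sum_{\substack{\lambda\in\sP\\\sigma(\lambda)=1}} y^{\ell(\lambda)} z^{\mu_k(\lambda)} q^{|\lambda|} = F_k(y)-F_k(yq),$$
whereas \eqref{eq:F2} identifies this same sum with $\frac{yzq}{(yq;q)_k}F_k(yq^k)$; equating the two right-hand sides is exactly \eqref{eq:F-dif}. The derivation of \eqref{eq:G-dif} is the verbatim analogue: \eqref{eq:G1} expresses the distinct-partition version of the smallest-part-$1$ sum as $G_k(y)-G_k(yq)$, \eqref{eq:G2} expresses it as $yzq(-yq^2;q)_{k-1}G_k(yq^k)$, and comparing the two gives the claim.

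There is no real obstacle here beyond bookkeeping; the two things I would double-check are the following. First, the map underlying \eqref{eq:F2} and \eqref{eq:G2} --- strip off every part of size $\le k$ from a partition $\lambda$ with $\sigma(\lambda)=1$, leaving a partition $\lambda'$ with $\sigma(\lambda')\ge k+1$ --- is a bijection onto pairs consisting of a ``small'' partition in which the part $1$ occurs and a partition with smallest part at least $k+1$, and it satisfies $\mu_k(\lambda)=\mu_k(\lambda')+1$; the proof of this last identity written above for $\lambda\in\sP$ applies unchanged to $\lambda\in\sD$ because $\sD\subseteq\sP$, the only difference being that in a distinct partition the stripped small parts contribute $yq(1+yq^2)\cdots(1+yq^k)=yq(-yq^2;q)_{k-1}$ rather than the factor $\frac{yq}{(yq;q)_k}$ produced by unrestricted multiplicities. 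Second, because only finitely many partitions have any prescribed size, $F_k(y)$ and $G_k(y)$ are bona fide elements of $\mathbb{Z}[y,z][[q]]$, so the term-by-term rearrangements above are legitimate. Combining these observations with \eqref{eq:F1}--\eqref{eq:G2} completes the proof.
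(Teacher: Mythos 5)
Your proposal is correct and is exactly the paper's argument: the lemma is obtained by equating the two expressions \eqref{eq:F1} and \eqref{eq:F2} (respectively \eqref{eq:G1} and \eqref{eq:G2}) for the generating function over partitions with smallest part $1$. The paper states this in one line (``In light to \eqref{eq:F1}--\eqref{eq:G2}\dots''), and your additional checks on the stripping bijection and on formal-power-series validity are sound but already covered by the preceding discussion in the text.
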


\section{Proofs of Theorem \ref{th:P-main} and Corollaries \ref{coro:P} and \ref{coro:P2}}

We first prove Theorem \ref{th:P-main}.

\begin{proof}[Proof of Theorem \ref{th:P-main}]
	We first multiply by $(yq;q)_\infty$ on both sides of \eqref{eq:F-dif}. Then,
	$$(yq;q)_\infty F_k(y) - (1-yq)(yq^2;q)_\infty F_k(yq) = yzq(yq^{k+1};q)_\infty F_k(yq^k).$$
	Let us define
	$$S_k(y):=(yq;q)_\infty F_k(y).$$
	Thus,
	\begin{align}
	S_k(y)-(1-yq)S_k(yq)=yzqS_k(yq^k).
	\end{align}
	We further write
	$$S_k(y):=\sum_{n\ge 0}s_k(n) y^n.$$
	Then $s_k(0)=S_k(0)=F_k(0)=1$. Further, for $n\ge 1$, we deduce from the above $q$-difference equation for $S_k(y)$ that
	$$s_k(n)-\big(s_k(n)q^n-qs_k(n-1)q^{n-1}\big) = zqs_k(n-1)q^{k(n-1)}.$$
	Therefore,
	$$\frac{s_k(n)}{s_k(n-1)}= \frac{-q^n(1-zq^{(k-1)(n-1)})}{1-q^n}.$$
	We then have
	$$s_k(n)=\frac{(-1)^n q^{n(n+1)/2} (z;q^{k-1})_n}{(q;q)_n},$$
	and therefore,
	$$S_k(y)=\sum_{n\ge 0}\frac{(-1)^n y^n q^{n(n+1)/2} (z;q^{k-1})_n}{(q;q)_n}.$$
	Since $F_k(y)= S_k(y)/(yq;q)_\infty$, the first part of the theorem follows.
	
	For the second part of this theorem, we recall Euler's first and second identities \cite[Corollary 2.2, p.~19]{And1976}:
	\begin{align}
	\sum_{m\ge 0} \frac{t^m}{(q;q)_m}&=\frac{1}{(t;q)_\infty},\label{eq:Eul1}\\
	\sum_{m\ge 0} \frac{(-t)^m q^{m(m-1)/2}}{(q;q)_m}&=(t;q)_\infty.\label{eq:Eul2}
	\end{align}
	Thus,
	\begin{align*}
	F_k(y)&=\frac{1}{(yq;q)_\infty}\sum_{n\ge 0}\frac{(-1)^n y^n q^{n(n+1)/2} (z;q^{k-1})_n}{(q;q)_n}\\
	&=\frac{(z;q^{k-1})_\infty}{(yq;q)_\infty} \sum_{n\ge 0}\frac{(-1)^n y^n q^{n(n+1)/2}}{(q;q)_n}\frac{1}{(zq^{(k-1)n};q^{k-1})_\infty}\\
	&=\frac{(z;q^{k-1})_\infty}{(yq;q)_\infty} \sum_{n\ge 0}\frac{(-1)^n y^n q^{n(n+1)/2}}{(q;q)_n}\sum_{m\ge 0}\frac{z^m q^{(k-1)mn}}{(q^{k-1};q^{k-1})_m}\tag{by \eqref{eq:Eul1}}\\
	&=\frac{(z;q^{k-1})_\infty}{(yq;q)_\infty} \sum_{m\ge 0} \frac{z^m}{(q^{k-1};q^{k-1})_m} \sum_{n\ge 0}\frac{(-1)^n y^n q^{n(n+1)/2+(k-1)mn}}{(q;q)_n}\\
	&=\frac{(z;q^{k-1})_\infty}{(yq;q)_\infty} \sum_{m\ge 0} \frac{z^m}{(q^{k-1};q^{k-1})_m} (yq^{(k-1)m+1};q)_\infty \tag{by \eqref{eq:Eul2}}\\
	&=(z;q^{k-1})_\infty \sum_{m\ge 0} \frac{z^m}{(q^{k-1};q^{k-1})_m (yq;q)_{(k-1)m}}.
	\end{align*}
	This proves \eqref{eq:P-main-2}.
\end{proof}

Corollary \ref{coro:P} is an immediate consequence of Theorem \ref{th:P-main}.

\begin{proof}[Proof of Corollary \ref{coro:P}]
	Letting $k=2$ in \eqref{eq:P-main-2}, we have
	$$\sum_{\lambda\in\sP} y^{\ell(\lambda)} z^{\mu_2(\lambda)} q^{|\lambda|} = (z;q)_\infty \sum_{n\ge 0}\frac{z^n}{(q;q)_n(yq;q)_n}.$$
	Recall Heine's third transformation \cite[(III.3), p.~359]{GR2004}:
	$${}_{2}\phi_1\left(\begin{matrix} a,b\\ c \end{matrix}; q, z\right) = \frac{(abz/c;q)_\infty}{(z;q)_\infty} {}_{2}\phi_1\left(\begin{matrix} c/a,c/b\\ c \end{matrix}; q, \frac{abz}{c}\right),$$
	where the ${}_2 \phi_{1}$ series is defined for $|z|<1$ by
	$${}_{2}\phi_1\left(\begin{matrix} a,b\\ c \end{matrix}; q, z\right):=\sum_{n\ge 0}\frac{(a;q)_n(b;q)_n}{(q;q)_n(c;q)_n} z^n.$$
	Putting $a=yq/\tau$, $b=z/\tau$, $c=yq$ and $z=\tau^2$ and letting $\tau\to 0$ in the above, we find that
	$$\lim_{\tau\to 0}{}_{2}\phi_1\left(\begin{matrix} yq/\tau,z/\tau\\ yq \end{matrix}; q, \tau^2\right)=(z;q)_\infty \sum_{n\ge 0}\frac{z^n}{(q;q)_n(yq;q)_n}.$$
	Thus,
	\begin{align}
	\sum_{\lambda\in\sP} y^{\ell(\lambda)} z^{\mu_2(\lambda)} q^{|\lambda|} &= \lim_{\tau\to 0}{}_{2}\phi_1\left(\begin{matrix} yq/\tau,z/\tau\\ yq \end{matrix}; q, \tau^2\right)\notag\\
	&=\sum_{n\ge 0}\frac{y^n z^n q^{n^2}}{(yq;q)_n (q;q)_n}.
	\end{align}
	
	On the other hand, given a partition $\lambda$, let $D(\lambda)$ be the length of its Durfee square. It suffices to show that
	$$\sum_{\lambda\in\sP} y^{\ell(\lambda)} z^{D(\lambda)} q^{|\lambda|} = \sum_{n\ge 0}\frac{y^n z^n q^{n^2}}{(yq;q)_n (q;q)_n}.$$
	To see this, we simply decompose a partition with Durfee square of length $n$ as in Figure \ref{fig:Durfee}. Then the Durfee square generates $y^nz^nq^{n^2}$; the block below the Durfee square generates $1/(yq;q)_n$; the block to the right of the Durfee square generates $1/(q;q)_n$. The desired identity therefore follows.
	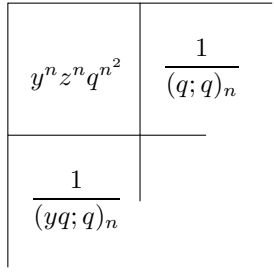
\begin{figure}[ht]
		\caption{Decomposing a partition with Durfee square of length $n$}\label{fig:Durfee}
		\medskip\medskip
		\centering
		\setlength{\unitlength}{1em}
		\ytableausetup{boxsize=1em}
		\begin{tikzpicture}[x=1em,y=1em]
		\draw(0,0)rectangle (5,-5);
		\draw (0,-5) -- (0,-10);
		\draw (5,-5) -- (5,-7.5);
		\draw (5,0) -- (10,0);
		\draw (5,-5) -- (7.5,-5);
		\node[ below right] at (5.5,-1) {$\dfrac{1}{(q;q)_n}$};
		\node[ below right] at (0.5,-6) {$\dfrac{1}{(yq;q)_n}$};
		\node[ below right] at (0.5,-1.75) {$y^nz^nq^{n^2}$};
		\end{tikzpicture}
	\end{figure}
\end{proof}

Lastly, we show Corollary \ref{coro:P2}.

\begin{proof}[Proof of Corollary \ref{coro:P2}]
	Recall the following special case of the Bailey--Daum summation \cite[Corollary 2.7, p.~21]{And1976}:
	\begin{equation}\label{eq:BD}
	\sum_{n\ge 0}\frac{(a;q)_n q^{n(n+1)/2}}{(q;q)_n} = (-q;q)_\infty (aq;q^2)_\infty.
	\end{equation}
	Setting $k=2$ and $y=z=-1$ in \eqref{eq:P-main}, then
	\begin{align*}
	\sum_{\lambda\in\sP} (-1)^{\ell(\lambda)+\mu_2(\lambda)} q^{|\lambda|}&=\frac{1}{(-q;q)_\infty}\sum_{n\ge 0}\frac{(-1;q)_n q^{n(n+1)/2}}{(q;q)_n}\\
	&= \frac{(-q;q)_\infty (-q;q^2)_\infty}{(-q;q)_\infty} \tag{by \eqref{eq:BD}}\\
	&=(-q;q^2)_\infty.
	\end{align*}
	This immediately implies the desired result.
\end{proof}

\section{Proof of Theorem \ref{th:D-main}}

In this section, we proceed with a proof of Theorem \ref{th:D-main}.

\begin{proof}[Proof of Theorem \ref{th:D-main}]
	We first divide by $(-yq^2;q)_\infty$ on both sides of \eqref{eq:G-dif}. Then,
	$$(1+yq)\frac{G_k(y)}{(-yq;q)_\infty} - \frac{G_k(yq)}{(-yq^2;q)_\infty} = yzq\frac{G_k(yq^k)}{(-yq^{k+1};q)_\infty}.$$
	Let us define
	$$T_k(y):=\frac{G_k(y)}{(-yq;q)_\infty}.$$
	Thus,
	\begin{align}
	(1+yq)T_k(y)-T_k(yq)=yzqT_k(yq^k).
	\end{align}
	Let us further define
	$$T_k(y):=\sum_{n\ge 0}t_k(n) y^n.$$
	Then $t_k(0)=T_k(0)=G_k(0)=1$. Further, for $n\ge 1$, the above $q$-difference equation for $T_k(y)$ yields
	$$\big(t_k(n)+qt_k(n-1)\big)-t_k(n)q^n = zqt_k(n-1)q^{k(n-1)},$$
	that is,
	$$\frac{t_k(n)}{t_k(n-1)}= \frac{-q(1-zq^{k(n-1)})}{1-q^n}.$$
	We then arrive at
	$$t_k(n)=\frac{(-1)^n q^n (z;q^k)_n}{(q;q)_n},$$
	and therefore,
	$$T_k(y)=\sum_{n\ge 0}\frac{(-1)^n y^n q^n (z;q^k)_n}{(q;q)_n}.$$
	Recalling that $G_k(y)=(-yq;q)_\infty T_k(y)$, we are led to the first part of the theorem.
	
	For the second part, we carry out a similar calculation as for \eqref{eq:P-main-2}. We have
	\begin{align*}
	G_k(y)&=(-yq;q)_\infty\sum_{n\ge 0}\frac{(-1)^n y^n q^{n} (z;q^{k})_n}{(q;q)_n}\\
	&=(-yq;q)_\infty (z;q^{k})_\infty \sum_{n\ge 0}\frac{(-1)^n y^n q^{n}}{(q;q)_n}\frac{1}{(zq^{kn};q^{k})_\infty}\\
	&=(-yq;q)_\infty (z;q^{k})_\infty \sum_{n\ge 0}\frac{(-1)^n y^n q^{n}}{(q;q)_n}\sum_{m\ge 0}\frac{z^m q^{kmn}}{(q^{k};q^{k})_m}\tag{by \eqref{eq:Eul1}}\\
	&=(-yq;q)_\infty (z;q^{k})_\infty \sum_{m\ge 0} \frac{z^m}{(q^{k};q^{k})_m} \sum_{n\ge 0}\frac{(-1)^n y^n q^{n+kmn}}{(q;q)_n}\\
	&=(-yq;q)_\infty (z;q^{k})_\infty \sum_{m\ge 0} \frac{z^m}{(q^{k};q^{k})_m} \frac{1}{(-yq^{km+1};q)_\infty} \tag{by \eqref{eq:Eul1}}\\
	&=(z;q^{k})_\infty \sum_{m\ge 0} \frac{(-yq;q)_{km} z^m}{(q^{k};q^{k})_m}.
	\end{align*}
	This proves \eqref{eq:D-main-2}.
\end{proof}

\section{Conclusion}

The succinctness of the expansions appearing in Theorems \ref{th:P-main} and \ref{th:D-main} suggests that the underlying combinatorics merits further investigation. Indeed, Corollaries \ref{coro:P}, \ref{coro:P2} and \ref{coro:nonnegative} call for combinatorial proofs. Also of significant interest is the fact that all the expansions in Theorems \ref{th:P-main} and \ref{th:D-main} are instances of the generalized Heine transformation:
\begin{equation}
\sum_{n\ge 0}\frac{(a;q^h)_n(b;q)_{hn} t^n}{(q^h;q^h)_n(c;q)_{hn}} = \frac{(b;q)_\infty (at;q^h)_\infty}{(c;q)_\infty (t;q^h)_\infty} \sum_{n\ge 0} \frac{(c/b;q)_n(t;q^h)_{n} b^n}{(q;q)_n(at;q^h)_{n}},
\end{equation}
which first appeared in \cite[Lemma 1, p.~577]{And1966} and plays a substantial role in the exploration of many of Ramanujan's identities from his Lost Notebook \cite[p.~6]{AB2009}. This suggests further combinatorial studies of the generalized Heine transformation.

\bibliographystyle{amsplain}

\end{document}